\newtheorem{theorem}{Theorem}
\newtheorem{corollary}[theorem]{Corollary}
\newenvironment{proof}[1][Proof]{\noindent\textbf{#1.} }{\ \rule{0.5em}{0.5em}}
\begin{document}

\title{A rational trigonometric relationship between the dihedral angles of
a tetrahedron and its circumradius}
\author{Gennady Arshad Notowidigdo \\
School of Mathematics and Statistics\\
UNSW Sydney\\
Sydney,\ NSW, Australia\\
gnotowidigdo@zohomail.com.au}
\date{}
\maketitle

\begin{abstract}
This paper will extend a known relationship between the circumradius and
dihedral angles of a tetrahedron in three-dimensional Euclidean space to
three-dimensional affine space over a general field not of characteristic
two or three, using only the framework of rational trigonometry devised by
Wildberger. In this framework, a linear algebraic view of trigonometry is
presented, which allows the associated three-dimensional vector space of
such a three-dimensional affine space to be equipped with a non-degenerate
symmetric bilinear form. This will also generalise the results presented to
arbitrary geometries parameterised by such a non-degenerate symmetric
bilinear form.
\end{abstract}

\section{Introduction}

The following result, from \cite{Cho}, establishes a relationship between
the dihedral angles of a tetrahedron in three-dimensional Euclidean space
over the real number field and its circumradius.

\begin{theorem}
Let $A_{0}$, $A_{1}$, $A_{2}$ and $A_{3}$ be the points of a tetrahedron in
three-dimensional Euclidean space over the real number field. Let $R$ be its
circumradius, and for distinct $i$ and $j$ in the set $\left\{
0,1,2,3\right\} $, let $\theta _{ij}$ be the interior dihedral angle between 
$A_{i}$ and $A_{j}$. Then, the volume $V$ of the tetrahedron is%
\begin{equation*}
V=\frac{32}{3}\frac{N_{0}N_{1}N_{2}N_{3}}{M^{\frac{3}{2}}}R^{3}
\end{equation*}%
where, for $l$ in the set $\left\{ 0,1,2,3\right\} $,%
\begin{equation*}
N_{l}=\det 
\begin{pmatrix}
1 & \cos \theta _{ij} & \cos \theta _{ik} \\ 
\cos \theta _{ji} & 1 & \cos \theta _{jk} \\ 
\cos \theta _{ki} & \cos \theta _{kj} & 1%
\end{pmatrix}%
\end{equation*}%
and%
\begin{equation*}
M=-\det 
\begin{pmatrix}
0 & \sin ^{2}\theta _{01} & \sin ^{2}\theta _{02} & \sin ^{2}\theta _{03} \\ 
\sin ^{2}\theta _{10} & 0 & \sin ^{2}\theta _{12} & \sin ^{2}\theta _{13} \\ 
\sin ^{2}\theta _{20} & \sin ^{2}\theta _{21} & 0 & \sin ^{2}\theta _{23} \\ 
\sin ^{2}\theta _{30} & \sin ^{2}\theta _{31} & \sin ^{2}\theta _{32} & 0%
\end{pmatrix}%
.
\end{equation*}
\end{theorem}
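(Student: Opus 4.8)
The plan is to convert the dihedral-angle data into data about the four faces, and then to eliminate the faces. Place the circumcentre at the origin and write $\mathbf{p}_{l}=A_{l}$, so that $\langle\mathbf{p}_{l},\mathbf{p}_{l}\rangle=R^{2}$; for each $l$ let $S_{l}$ be the area of the face opposite $A_{l}$, let $\mathbf{N}_{l}$ be its outward area vector ($\lvert\mathbf{N}_{l}\rvert=S_{l}$), and $\mathbf{n}_{l}=\mathbf{N}_{l}/S_{l}$ the outward unit normal, so that $\mathbf{n}_{i}\cdot\mathbf{n}_{j}=-\cos\theta_{ij}$. Then $N_{l}$ is the Gram determinant of the three unit normals $\mathbf{n}_{i},\mathbf{n}_{j},\mathbf{n}_{k}$ at $A_{l}$ (this is the displayed $3\times 3$ determinant, once the dihedral cosines and the $\mathbf{n}_{a}\cdot\mathbf{n}_{b}$ are matched by the sign convention), and $\sin^{2}\theta_{ij}=\lvert\mathbf{n}_{i}\times\mathbf{n}_{j}\rvert^{2}$.

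First I would handle the $N_{l}$. Writing $\mathbf{e}_{m}=A_{m}-A_{l}$ for the three edge vectors out of $A_{l}$, one has $\mathbf{N}_{i}=\pm\tfrac{1}{2}\,\mathbf{e}_{j}\times\mathbf{e}_{k}$ (cyclically), so the reciprocal-basis identity $\det[\mathbf{e}_{j}\times\mathbf{e}_{k},\mathbf{e}_{k}\times\mathbf{e}_{i},\mathbf{e}_{i}\times\mathbf{e}_{j}]=(\det[\mathbf{e}_{i},\mathbf{e}_{j},\mathbf{e}_{k}])^{2}=(6V)^{2}$ yields $\det[\mathbf{N}_{i},\mathbf{N}_{j},\mathbf{N}_{k}]=-\tfrac{9}{2}V^{2}$, independently of $l$. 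Squaring and dividing out $S_{i}^{2}S_{j}^{2}S_{k}^{2}$ gives $N_{l}=81V^{4}/\bigl(4(S_{i}S_{j}S_{k})^{2}\bigr)$, hence $N_{0}N_{1}N_{2}N_{3}=3^{16}V^{16}/\bigl(2^{8}(S_{0}S_{1}S_{2}S_{3})^{6}\bigr)$. Next, for $M$: computing the volume two ways across the edge $A_{k}A_{l}$ (or reading it off $\mathbf{n}_{i}\times\mathbf{n}_{j}$) gives $\sin\theta_{ij}=3V\lvert A_{k}A_{l}\rvert/(2S_{i}S_{j})$, so the matrix inside $M$ equals $\tfrac{9}{4}V^{2}\,D^{-2}LD^{-2}$, where $D=\mathrm{diag}(S_{0},\dots,S_{3})$ and $L$ is the symmetric matrix whose $(a,b)$ entry is the squared length of the edge \emph{opposite} $A_{a}A_{b}$; thus $M=-3^{8}V^{8}\det L/\bigl(2^{8}(S_{0}S_{1}S_{2}S_{3})^{4}\bigr)$.

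The crux --- and the only place the circumradius enters --- is to evaluate $\det L$. From $\lvert A_{a}A_{b}\rvert^{2}=2R^{2}-2\langle\mathbf{p}_{a},\mathbf{p}_{b}\rangle$ (valid also for $a=b$) the ordinary squared-distance matrix factors as $E=2\,\tilde{P}^{\mathsf{T}}\,\mathrm{diag}(R^{2},-1,-1,-1)\,\tilde{P}$, where $\tilde{P}$ is the $4\times 4$ matrix with first row $\mathbf{1}^{\mathsf{T}}$ and remaining rows the coordinates of $\mathbf{p}_{0},\dots,\mathbf{p}_{3}$; since $\det\tilde{P}=\pm 6V$ this gives at once $\det E=-576\,R^{2}V^{2}$. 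An elementary expansion (only the nine derangements of $\{0,1,2,3\}$ contribute) exhibits $\det E$ as the sum of the three monomials $e_{ij}^{2}e_{kl}^{2}$ over pairs of opposite edges minus twice the sum of the three $4$-cycle monomials $e_{ij}e_{jk}e_{kl}e_{li}$, and this polynomial in the squared edge lengths is visibly unchanged when opposite edges are interchanged; therefore $\det L=\det E=-576\,R^{2}V^{2}$. Substituting back, $M=3^{10}R^{2}V^{10}/\bigl(4(S_{0}S_{1}S_{2}S_{3})^{4}\bigr)$, so $M^{3/2}=3^{15}R^{3}V^{15}/\bigl(8(S_{0}S_{1}S_{2}S_{3})^{6}\bigr)$, and dividing this into $N_{0}N_{1}N_{2}N_{3}$ the face areas cancel identically, leaving $N_{0}N_{1}N_{2}N_{3}/M^{3/2}=3V/(32R^{3})$ --- the asserted identity.

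I expect the real difficulty to be organisational rather than a single hard step: fixing the normalisations and the sign of $\det[\mathbf{N}_{i},\mathbf{N}_{j},\mathbf{N}_{k}]$, and --- much more so for the aims of this paper --- running the whole argument over a general field with a non-degenerate symmetric bilinear form in place of the dot product. There ``area'' must give way to its square (a quadrea) and ``$6V$'' to the relevant Cayley--Menger determinant, and since $M^{3/2}$, $R^{3}$ and $V$ are odd powers the robust statement is the polynomial identity $9\,V^{2}M^{3}=1024\,(N_{0}N_{1}N_{2}N_{3})^{2}R^{6}$, from which the displayed formula is recovered whenever the relevant spread product is a square in the field. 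Finally $M>0$ in the Euclidean case, which is exactly $\det L=-576R^{2}V^{2}<0$.
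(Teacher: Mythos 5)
Your argument is correct, and it is worth saying up front that the paper never actually proves this theorem: Theorem 1 is quoted from \cite{Cho} as motivation, and what the paper proves is the rational analogue (the Circumquadrance dihedral spread theorem), by transporting everything to the Standard tetrahedron and invoking Crelle's circumquadrance formula. Your Euclidean vector proof is therefore a genuinely different, self-contained route, and its arithmetic checks out: $N_{l}=81V^{4}/\bigl(4(S_{i}S_{j}S_{k})^{2}\bigr)$, the invariance of the hollow $4\times4$ determinant under interchanging opposite edges, $\det E=-576R^{2}V^{2}$ from the factorisation $E=2\tilde{P}^{T}\mathrm{diag}(R^{2},-1,-1,-1)\tilde{P}$, and the final cancellation $N_{0}N_{1}N_{2}N_{3}/M^{3/2}=3V/(32R^{3})$ all hold. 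Structurally the two arguments are cousins: your crux $\det E=-576R^{2}V^{2}$ is precisely the Euclidean specialisation of the paper's Crelle circumquadrance formula $4\mathcal{V}R=A\left(Q_{01}Q_{23},Q_{02}Q_{13},Q_{03}Q_{12}\right)$ (the negated determinant of the hollow quadrance matrix is exactly that Archimedes expression, with $\mathcal{V}=144V^{2}$ and circumquadrance $R^{2}$), but the paper proves it by computing in the entries of $B$ and $\mathrm{adj}\,B$ for the Standard tetrahedron rather than by placing the circumcentre at the origin; likewise your relation $\sin\theta_{ij}=3V\left\vert A_{k}A_{l}\right\vert /(2S_{i}S_{j})$ is the Euclidean form of the paper's defining relation $E_{ij}=4Q_{ij}\mathcal{V}/(\mathcal{A}_{ijk}\mathcal{A}_{ijl})$. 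What your route buys is an actual proof of the quoted Euclidean statement, with the face areas visible at every stage; what the paper's route buys is freedom from cross products, square roots, signs and ordering, so that the identity survives over a general field and bilinear form --- exactly the squared reformulation $9V^{2}M^{3}=1024\left(N_{0}N_{1}N_{2}N_{3}\right)^{2}R^{6}$ you propose at the end.

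The one point you must pin down is the convention inside $N_{l}$. Your proof establishes the identity with $N_{l}$ equal to the Gram determinant $\det\left(\mathbf{n}_{a}\cdot\mathbf{n}_{b}\right)_{a,b\in\{i,j,k\}}$, i.e.\ with off-diagonal entries $-\cos\theta_{ab}$ where $\theta_{ab}$ is the interior dihedral angle between the faces opposite $A_{a}$ and $A_{b}$. That is not cosmetic bookkeeping: read literally with $+\cos$ of interior angles the displayed formula is false (for the regular tetrahedron each $N_{l}$ would be $20/27$ instead of the required $16/27$, giving roughly $1.25R^{3}$ against $V\approx0.51R^{3}$), and with the edge-indexed reading of $\theta_{ij}$ the $N_{l}$ are no longer Gram determinants at all. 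So the clause ``matched by the sign convention'' should become an explicit statement that you prove the theorem under the Gram-matrix reading of $N_{l}$, which is the reading under which the quoted statement is true; $M$ is unaffected, since $\sin^{2}$ is blind to the interior/exterior choice and its determinant is invariant under the opposite-edge relabelling, as you note.
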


In this paper, we aim to derive a similar result using only the framework of
rational trigonometry from \cite{WildDP}. In rational trigonometry, the
classical metrical notions of distance and angle are replaced respectively
by quadrance and spread, which have purely linear algebraic definitions.
This allows us to define the metrical notions of rational trigonometry in
three-dimensional affine space; we can thus equip to its associated
three-dimensional vector space a non-degenerate symmetric bilinear form that
can be represented by an invertible symmetric $3\times 3$ matrix, for which
we then have a generalised definition of perpendicularity. This gives us a
general metrical structure by which the results seen in this paper can be
generalised to arbitrary geometries. We should also note that these results
can also be generalised to arbitrary fields not of characteristic $2$ or $3$%
, so that the Zero denominator convention in \cite[p. 28]{WildDP} is adopted
without constant reference.

We will extend a known result from \cite{Crelle} with regards to the
circumradius of a tetrahedron and its volume and side lengths to a general
metrical framework, which allows us to obtain a rational analog of the
aforementioned result. Proving this result also allows us to obtain a result
pertaining to the ratio of the product of opposing dihedral angles to the
product of opposing side lengths, a result found in \cite{NotoWild2} but
verified here in a different way, and to express the circumradius explicitly
in terms of the dihedral angles, volume and areas of the tetrahedron. The
notions of area, volume and dihedral angle from classical trigonometry will
be replaced by the rational trigonometric notions of quadrea, quadrume and
dihedral spread to suit the demands of the paper.

We will also adopt a novel approach to proving the results presented in this
paper, based on the framework of \cite{Noto1}. Here, we take an affine map
from a general tetrahedron to a special tetrahedron whose points are%
\begin{equation*}
X_{0}=\left[ 0,0,0\right] ,\quad X_{1}=\left[ 1,0,0\right] ,\quad X_{2}=%
\left[ 0,1,0\right] \quad \mathrm{and}\quad X_{3}=\left[ 0,0,1\right] .
\end{equation*}%
This special tetrahedron will be named the \emph{Standard tetrahedron}, and
it allows us to analyse a specific tetrahedron over a general symmetric
bilinear form rather than a general tetrahedron over a specific symmetric
bilinear form. Using this tool, a key property of the affine map implies
that any result we prove for the Standard tetrahedron can be generalised to
a general tetrahedron, by way of the inverse affine map; thus, it is
sufficient that any result presented in this paper is proven for the
Standard tetrahedron.

With the use of the Standard tetrahedron, we may be able to prove our
desired results using this powerful mechanism, which makes
computationally-intensive problems more manageable.

\section{Preliminaries}

We start with the three-dimensional affine space over a general field $%
\mathbb{F}$ not of characteristic $2$ or $3$, which we will denote by $%
\mathbb{A}^{3}$. The associated three-dimensional vector space $\mathbb{V}%
^{3}$, which contains three-dimensional row vectors, is then equipped with a
non-degenerate symmetric bilinear form represented by an invertible
symmetric $3\times 3$ matrix $B$ and defined by%
\begin{equation*}
u\cdot _{B}v=uBv^{T}
\end{equation*}%
for vectors $u$ and $v$ in $\mathbb{V}^{3}$, which we will call the $B$\emph{%
-scalar product}\textbf{\ }of $u$ and $v$ (see \cite{NotoWild1}). From this,
we also define the $B$\emph{-quadratic form} by%
\begin{equation*}
Q_{B}\left( v\right) =v\cdot _{B}v
\end{equation*}%
for a vector $v$ in $\mathbb{V}^{3}$. If $u\cdot _{B}v=0$ then $u$ and $v$
are said to be $B$\emph{-perpendicular}. We also say that a vector $v$ is $B$%
\emph{-null} if $Q_{B}\left( v\right) =0$; note that if $B$ is positive
definite, then $v=\mathbf{0}$ is the only $B$-null vector.

The primary objects in $\mathbb{A}^{3}$ are called \emph{points} and are
denoted in this paper as a triple enclosed in rectangular brackets, and the
primary objects in $\mathbb{V}^{3}$ are called \emph{vectors} and are
typically denoted as a three-dimensional row matrix. The association
mentioned above is described by the operation%
\begin{equation*}
X+v=Y
\end{equation*}%
for points $X$ and $Y$ in $\mathbb{A}^{3}$, and $v$ a vector in $\mathbb{V}%
^{3}$, which then allows us to define a vector between $X$ and $Y$ by 
\begin{equation*}
\overrightarrow{XY}=v\sim Y-X.
\end{equation*}%
Here, we denote $Y-X$ to be the affine difference of two points in $\mathbb{A%
}^{3}$, which is equivalent to the vector $v$.

\subsection{Tetrahedron in three-dimensional affine space}

A \emph{tetrahedron} in $\mathbb{A}^{3}$ is a set of four points in $\mathbb{%
A}^{3}$ and typically denoted as $\overline{A_{0}A_{1}A_{2}A_{3}}$. An \emph{%
edge} of a tetrahedron $\overline{A_{0}A_{1}A_{2}A_{3}}$ is then a subset
containing any two distinct points of $\overline{A_{0}A_{1}A_{2}A_{3}}$ and
is denoted by $\overline{A_{i}A_{j}}$ for integers $i$ and $j$ satisfying $%
0\leq i<j\leq 3$. Furthermore, a \emph{triangle} of a tetrahedron $\overline{%
A_{0}A_{1}A_{2}A_{3}}$ is a subset of any three distinct points of $%
\overline{A_{0}A_{1}A_{2}A_{3}}$ and is denoted by $\overline{A_{i}A_{j}A_{k}%
}$ for integers $i$, $j$ and $k$ satisfying $0\leq i<j<k\leq 3$. Note that
there are six edges and four triangles associated to any tetrahedron in $%
\mathbb{A}^{3}$, and that there are three edges associated to each triangle
of such a tetrahedron. We will also define the \emph{midpoint} of the edge $%
\overline{A_{i}A_{j}}$ to be the point $M_{ij}$ satisfying%
\begin{equation*}
\overrightarrow{A_{i}M_{ij}}=\overrightarrow{M_{ij}A_{j}}=\frac{1}{2}%
\overrightarrow{A_{i}A_{j}}.
\end{equation*}

Associated to each edge of a tetrahedron $\overline{A_{0}A_{1}A_{2}A_{3}}$
is a $B$\emph{-quadrance}, which is the number%
\begin{equation*}
Q_{B}\left( \overline{A_{i}A_{j}}\right) =Q_{B}\left( \overrightarrow{%
A_{i}A_{j}}\right) =\overrightarrow{A_{i}A_{j}}\cdot _{B}\overrightarrow{%
A_{i}A_{j}}
\end{equation*}%
for integers $i$ and $j$ satisfying $0\leq i<j\leq 3$. This will be denoted
for the rest of this paper by $Q_{ij}$.

Given a triangle $\overline{A_{i}A_{j}A_{k}}$ of a tetrahedron $\overline{%
A_{0}A_{1}A_{2}A_{3}}$, for integers $i$, $j$ and $k$ satisfying $0\leq
i<j<k\leq 3$, we have the three edges $\overline{A_{i}A_{j}}$, $\overline{%
A_{i}A_{k}}$ and $\overline{A_{j}A_{k}}$ associated to it, with respective $%
B $-quadrances $Q_{ij}$, $Q_{ik}$ and $Q_{jk}$. This allows to associate to $%
\overline{A_{i}A_{j}A_{k}}$ the number%
\begin{equation*}
\mathcal{A}_{B}\left( \overline{A_{i}A_{j}A_{k}}\right) =A\left(
Q_{ij},Q_{ik},Q_{jk}\right)
\end{equation*}%
where%
\begin{equation*}
A\left( a,b,c\right) =\left( a+b+c\right) ^{2}-2\left(
a^{2}+b^{2}+c^{2}\right)
\end{equation*}%
is \emph{Archimedes's function} (see \cite[p. 64]{WildDP}). This quantity is
called the $B$\emph{-quadrea} and will be denoted for the rest of this paper
by $\mathcal{A}_{ijk}$.

Associated to a tetrahedron $\overline{A_{0}A_{1}A_{2}A_{3}}$ itself is the $%
B$\emph{-quadrume}, which is the number%
\begin{equation*}
\mathcal{V}_{B}\left( \overline{A_{0}A_{1}A_{2}A_{3}}\right) =\frac{1}{2}%
E\left( Q_{23},Q_{13},Q_{12},Q_{01},Q_{02,}Q_{03}\right)
\end{equation*}%
where%
\begin{equation*}
E\left( q_{1},q_{2},q_{3},p_{1},p_{2},p_{3}\right) =\det 
\begin{pmatrix}
2p_{1} & p_{1}+p_{2}-q_{3} & p_{1}+p_{3}-q_{2} \\ 
p_{1}+p_{2}-q_{3} & 2p_{2} & p_{2}+p_{3}-q_{1} \\ 
p_{1}+p_{3}-q_{2} & p_{2}+p_{3}-q_{1} & 2p_{3}%
\end{pmatrix}%
\end{equation*}%
is \emph{Euler's four-point function} (see \cite[p. 191]{WildDP}). This
function is essentially the \emph{Cayley-Menger determinant} (see \cite%
{Audet}, \cite{Dorrie} and \cite{Sommerville}) and it satisfies the
properties%
\begin{equation*}
E\left( q_{1},q_{2},q_{3},p_{1},p_{2},p_{3}\right) =E\left(
p_{1},p_{2},p_{3},q_{1},q_{2},q_{3}\right)
\end{equation*}%
and%
\begin{equation*}
E\left( q_{1},q_{2},q_{3},p_{1},p_{2},p_{3}\right) =E\left(
q_{i},q_{j},q_{k},p_{i},p_{j},p_{k}\right)
\end{equation*}%
for any permutation $i$, $j$ and $k$ of the integers $1$, $2$ and $3$. For
the rest of this paper, this quantity is denoted by $\mathcal{V}$.

For $0\leq i<j\leq 3$ and indices $k$ and $l$ distinct from $i$ and $j$, we
can associate to a pair of triangles $\overline{A_{i}A_{j}A_{k}}$ and $%
\overline{A_{i}A_{j}A_{l}}$ of a tetrahedron $\overline{A_{0}A_{1}A_{2}A_{3}}
$ the number%
\begin{equation*}
E_{ij}=\frac{4Q_{ij}\mathcal{V}}{\mathcal{A}_{ijk}\mathcal{A}_{ijl}}.
\end{equation*}%
This quantity will be called the $B$\emph{-dihedral spread} (see \cite%
{NotoWild2})\ between the triangles $\overline{A_{i}A_{j}A_{k}}$ and $%
\overline{A_{i}A_{j}A_{l}}$, with the edge $\overline{A_{i}A_{j}}$ being
common in these two triangles. Here, the result of the Tetrahedron dihedral
spread formula in \cite{NotoWild2} is used as a definition to simplify our
discussion.

\subsection{Standard tetrahedron}

Consider an affine map which sends a general tetrahedron $\overline{%
A_{0}A_{1}A_{2}A_{3}}$ to the tetrahedron $\overline{X_{0}X_{1}X_{2}X_{3}}$,
where%
\begin{equation*}
X_{0}=\left[ 0,0,0\right] ,\quad X_{1}=\left[ 1,0,0\right] ,\quad X_{2}=%
\left[ 0,1,0\right] \quad \mathrm{and}\quad X_{3}=\left[ 0,0,1\right] .
\end{equation*}%
Such a tetrahedron will be called the \emph{Standard tetrahedron} (see \cite%
{Noto1}). If we have a $C$-scalar product on $\mathbb{V}^{3}$, the affine
map induces a new scalar product given by%
\begin{eqnarray*}
u\cdot _{C}v &=&uCv^{T}=u\left( LL^{-1}\right) C\left( LL^{-1}\right)
^{T}v^{T} \\
&=&\left( uL\right) \left[ \left( L^{-1}\right) C\left( L^{-1}\right) ^{T}%
\right] \left( vL\right) ^{T}
\end{eqnarray*}%
where $L$ is the matrix representing the linear component of the affine map.
For $M=L^{-1}$, we set the matrix $MCM^{T}$ to be the matrix $B$, so that%
\begin{equation*}
u\cdot _{C}v=\left( uL\right) \cdot _{B}\left( vL\right) .
\end{equation*}%
With this tool we may prove a result for a general tetrahedron by verifying
it for the Standard tetrahedron without any loss of generality, due to the
preservation of various geometric objects under an affine map.

\subsubsection{Trigonometric quantities of the Standard tetrahedron}

In what follows, let%
\begin{equation*}
B=%
\begin{pmatrix}
a_{1} & b_{3} & b_{2} \\ 
b_{3} & a_{2} & b_{1} \\ 
b_{2} & b_{1} & a_{3}%
\end{pmatrix}%
\end{equation*}%
we define%
\begin{equation*}
r_{1}=a_{2}+a_{3}-2b_{1},\quad r_{2}=a_{1}+a_{3}-2b_{2},\quad
r_{3}=a_{1}+a_{2}-2b_{3}
\end{equation*}%
and%
\begin{equation*}
\Delta =\det B=a_{1}a_{2}\allowbreak
a_{3}+2b_{1}b_{2}b_{3}-a_{1}b_{1}^{2}-a_{2}b_{2}^{2}-a_{3}b_{3}^{2}.
\end{equation*}%
We will also define%
\begin{equation*}
\limfunc{adj}B=%
\begin{pmatrix}
a_{2}a_{3}-b_{1}^{2} & b_{1}b_{2}-a_{3}b_{3} & b_{1}b_{3}-a_{2}b_{2} \\ 
b_{1}b_{2}-a_{3}b_{3} & a_{1}a_{3}-b_{2}^{2} & b_{2}b_{3}-a_{1}b_{1} \\ 
b_{1}b_{3}-a_{2}b_{2} & b_{2}b_{3}-a_{1}b_{1} & a_{1}a_{2}-b_{3}^{2}%
\end{pmatrix}%
=%
\begin{pmatrix}
\alpha _{1} & \beta _{3} & \beta _{2} \\ 
\beta _{3} & \alpha _{2} & \beta _{1} \\ 
\beta _{2} & \beta _{1} & \alpha _{3}%
\end{pmatrix}%
\end{equation*}%
to be the \emph{adjoint matrix} (see \cite{AR}) of $B$, so that we may define%
\begin{equation*}
D=\alpha _{1}+\alpha _{2}+\alpha _{3}+2\beta _{1}+2\beta _{2}+2\beta _{3}.
\end{equation*}%
The $B$-quadrances of $\overline{X_{0}X_{1}X_{2}X_{3}}$ are by definition%
\begin{equation*}
Q_{01}=%
\begin{pmatrix}
1 & 0 & 0%
\end{pmatrix}%
\begin{pmatrix}
a_{1} & b_{3} & b_{2} \\ 
b_{3} & a_{2} & b_{1} \\ 
b_{2} & b_{1} & a_{3}%
\end{pmatrix}%
\begin{pmatrix}
1 & 0 & 0%
\end{pmatrix}%
^{T}=a_{1}
\end{equation*}%
and similarly%
\begin{equation*}
Q_{02}=a_{2},\quad Q_{03}=a_{3},\quad Q_{23}=r_{1},\quad Q_{13}=r_{2}\quad 
\mathrm{and}\quad Q_{12}=r_{3}.
\end{equation*}%
The $B$-quadreas of $\overline{X_{0}X_{1}X_{2}X_{3}}$ are by definition%
\begin{eqnarray*}
\mathcal{A}_{012} &=&A\left( Q_{01},Q_{02},Q_{12}\right) =\left(
a_{1}+a_{2}+r_{3}\right) ^{2}-2\left( a_{1}^{2}+a_{2}^{2}+r_{3}^{2}\right) \\
&=&4\left( a_{1}a_{2}-b_{3}^{2}\right) =4\alpha _{3}
\end{eqnarray*}%
and similarly%
\begin{equation*}
\mathcal{A}_{013}=4\alpha _{2},\quad \mathcal{A}_{023}=4\alpha _{1}\quad 
\mathrm{and\quad }\mathcal{A}_{123}=4D
\end{equation*}%
and, by definition, the $B$-quadrume of $\overline{X_{0}X_{1}X_{2}X_{3}}$ 
\begin{eqnarray*}
\mathcal{V} &=&\frac{1}{2}\det 
\begin{pmatrix}
2Q_{01} & Q_{01}+Q_{02}-Q_{12} & Q_{01}+Q_{03}-Q_{13} \\ 
Q_{01}+Q_{02}-Q_{12} & 2Q_{02} & Q_{02}+Q_{03}-Q_{23} \\ 
Q_{01}+Q_{03}-Q_{13} & Q_{02}+Q_{03}-Q_{23} & 2Q_{03}%
\end{pmatrix}
\\
&=&\frac{1}{2}\det 
\begin{pmatrix}
2a_{1} & 2b_{3} & 2b_{2} \\ 
2b_{3} & 2a_{2} & 2b_{1} \\ 
2b_{2} & 2b_{1} & 2a_{3}%
\end{pmatrix}%
=4\Delta .
\end{eqnarray*}%
Finally, the $B$-dihedral spreads of $\overline{X_{0}X_{1}X_{2}X_{3}}$ are
by definition%
\begin{equation*}
E_{01}=\frac{4Q_{01}\mathcal{V}}{\mathcal{A}_{012}\mathcal{A}_{013}}=\frac{%
4a_{1}\left( 4\Delta \right) }{\left( 4\alpha _{3}\right) \left( 4\alpha
_{2}\right) }=\frac{a_{1}\Delta }{\alpha _{2}\alpha _{3}}
\end{equation*}%
and similarly 
\begin{equation*}
E_{02}=\frac{a_{2}\Delta }{\alpha _{1}\alpha _{3}},\quad E_{03}=\frac{%
a_{3}\Delta }{\alpha _{1}\alpha _{2}},\quad E_{23}=\frac{r_{1}\Delta }{%
\alpha _{1}D},\quad E_{13}=\frac{r_{2}\Delta }{\alpha _{2}D}\quad \mathrm{and%
}\text{\quad }E_{12}=\frac{r_{3}\Delta }{\alpha _{3}D}.
\end{equation*}

\subsection{Circumquadrance of tetrahedron}

One of the most important centres of a tetrahedron is its \emph{circumcentre}
(see \cite[pp. 82-83]{Narayan}). In a general metrical framework, the
circumcentre will end up being dependent on the choice of non-degenerate
symmetric bilinear form. With this in mind, we proceed to find a suitable
generalisation.

We start by defining a \emph{plane} through a point $A$ and $B$%
-perpendicular to a vector $v$ to be the space of points $X$ satisfying the
equation%
\begin{equation*}
v\cdot _{B}\overrightarrow{AX}=0.
\end{equation*}%
Then we may define a $B$\emph{-midplane} associated to an edge $\overline{%
A_{i}A_{j}}$ of a tetrahedron $\overline{A_{0}A_{1}A_{2}A_{3}}$ to be a
plane through the midpoint $M_{ij}$ and $B$-perpendicular to the vector $%
\overrightarrow{A_{i}A_{j}}$, for $0\leq i<j\leq 3$. There are six $B$%
-midplanes in total for a general tetrahedron. The following result
establishes the concurrency of each of the six $B$-midplanes of a
tetrahedron.

\begin{theorem}[Tetrahedron $B$-circumcentre theorem]
The six $B$-midplanes associated to each edge of a tetrahedron $\overline{%
A_{0}A_{1}A_{2}A_{3}}$ meet at a single point.
\end{theorem}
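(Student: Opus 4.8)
The plan is to pass to the Standard tetrahedron and reduce the concurrency statement to a single non-degenerate linear system. The key preliminary observation is a reformulation of the $B$-midplane condition in terms of $B$-quadrances: a point $X$ lies on the $B$-midplane of the edge $\overline{A_iA_j}$ precisely when $Q_B(\overrightarrow{A_iX})=Q_B(\overrightarrow{A_jX})$. Indeed, since $\overrightarrow{A_iX}-\overrightarrow{A_jX}=\overrightarrow{A_iA_j}$ and $\overrightarrow{A_iX}+\overrightarrow{A_jX}=2\overrightarrow{M_{ij}X}$ (as $M_{ij}$ is the midpoint of $\overline{A_iA_j}$), bilinearity and symmetry of the $B$-scalar product give
\begin{equation*}
Q_B(\overrightarrow{A_iX})-Q_B(\overrightarrow{A_jX})=\overrightarrow{A_iA_j}\cdot_B\bigl(\overrightarrow{A_iX}+\overrightarrow{A_jX}\bigr)=2\,\overrightarrow{A_iA_j}\cdot_B\overrightarrow{M_{ij}X},
\end{equation*}
so the defining equation $\overrightarrow{A_iA_j}\cdot_B\overrightarrow{M_{ij}X}=0$ of the midplane is equivalent to $Q_B(\overrightarrow{A_iX})=Q_B(\overrightarrow{A_jX})$. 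Hence it suffices to produce a point $P$ with $Q_B(\overrightarrow{A_0P})=Q_B(\overrightarrow{A_1P})=Q_B(\overrightarrow{A_2P})=Q_B(\overrightarrow{A_3P})$, since such a $P$ then lies on the midplane of every one of the six edges; in particular only three of the six midplane equations are independent.

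Next I would invoke the reduction to the Standard tetrahedron set up above. The affine map carrying $\overline{A_0A_1A_2A_3}$ onto $\overline{X_0X_1X_2X_3}$ sends midpoints to midpoints, and the identity $u\cdot_C v=(uL)\cdot_B(vL)$ shows it carries the $C$-midplane of $\overline{A_iA_j}$ onto the $B$-midplane of $\overline{X_iX_j}$; being a bijection, it preserves concurrency, so it is enough to prove the theorem for the Standard tetrahedron.

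For $\overline{X_0X_1X_2X_3}$, write $P=[x,y,z]$ and use $X_0=[0,0,0]$. In each equation $Q_B(\overrightarrow{X_0P})=Q_B(\overrightarrow{X_iP})$ with $i=1,2,3$ the quadratic terms cancel, and the system collapses to
\begin{equation*}
2B\begin{pmatrix}x\\y\\z\end{pmatrix}=\begin{pmatrix}a_1\\a_2\\a_3\end{pmatrix},
\end{equation*}
whose right-hand side is the diagonal of $B$ (equivalently $a_i=Q_{0i}$). Since $B$ is non-degenerate, $\Delta=\det B\neq 0$, so there is a unique solution $P=\frac{1}{2}B^{-1}(a_1,a_2,a_3)^T$, and this point lies on all six $B$-midplanes; transporting it back through the inverse affine map gives the common point for the general tetrahedron.

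The computations here are routine, so the only genuinely substantive steps are the quadrance reformulation of the midplane, which is what makes three of the six equations suffice, and the appeal to non-degeneracy of $B$ to guarantee a solution. I do not expect a serious obstacle; the main thing to get right is the bookkeeping of signs and midpoints in the first step, after which the linear algebra is immediate and in fact also yields an explicit formula for the $B$-circumcentre.
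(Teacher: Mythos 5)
Your proposal is correct. The reformulation of the $B$-midplane condition as the quadrance equation $Q_B(\overrightarrow{A_iX})=Q_B(\overrightarrow{A_jX})$ is valid (the identity $Q_B(u)-Q_B(v)=(u-v)\cdot_B(u+v)$ holds by symmetry and bilinearity, and division by $2$ is permitted since the characteristic is not $2$), and the three equations for the Standard tetrahedron do collapse to $2B(x,y,z)^T=(a_1,a_2,a_3)^T$, matching the paper's first three midplane equations; invertibility of $B$ then gives the unique solution $\tfrac{1}{2}B^{-1}(a_1,a_2,a_3)^T$, which is exactly the paper's point $C$ written via $\limfunc{adj}B/\Delta$. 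Where you differ from the paper is in how the remaining three midplanes are handled: the paper solves the three midplane equations through $X_0$ explicitly and then substitutes the resulting point into the last three equations to verify concurrency by direct computation, whereas your equidistance lemma makes those three equations automatic by transitivity of equality, so no verification step is needed. Your route buys a cleaner conceptual argument (and makes transparent why only three of the six equations are independent), at the cost of proving the small preliminary lemma; the paper's route is pure computation in the standardised coordinates, which is in keeping with its general strategy of pushing everything onto the Standard tetrahedron. Both arguments rely on the same affine-map reduction and the same non-degeneracy hypothesis $\Delta\neq 0$, and both produce the explicit circumcentre used later in the paper.
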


\begin{proof}
Without loss of generality, transform $\overline{A_{0}A_{1}A_{2}A_{3}}$ to
the Standard tetrahedron $\overline{X_{0}X_{1}X_{2}X_{3}}$. Note that this
will induce a new non-degenerate symmetric bilinear form, but since we
started with a general symmetric bilinear form we may use the same one
without any loss of generality. For $0\leq i<j\leq 3$, let $M_{ij}$ be the
midpoint of $\overline{X_{i}X_{j}}$, so that any point $C=\left[ x,y,z\right]
$ on each $B$-midplane of $\overline{X_{0}X_{1}X_{2}X_{3}}$ satisfies the
equation%
\begin{equation*}
\overrightarrow{X_{i}X_{j}}\cdot _{B}\overrightarrow{M_{ij}C}=0.
\end{equation*}%
This yields six equations, namely%
\begin{equation*}
a_{1}x+b_{3}y+b_{2}z=\frac{1}{2}a_{1},
\end{equation*}%
\begin{equation*}
b_{3}x+a_{2}y+b_{1}z=\frac{1}{2}a_{2},
\end{equation*}%
\begin{equation*}
b_{2}x+b_{1}y+a_{3}z=\frac{1}{2}a_{3},
\end{equation*}%
\begin{equation*}
\left( b_{3}-a_{1}\right) x+\left( a_{2}-b_{3}\right) y+\left(
b_{1}-b_{2}\right) z=\frac{1}{2}\left( a_{2}-a_{1}\right) \allowbreak ,
\end{equation*}%
\begin{equation*}
\allowbreak \left( b_{2}-a_{1}\right) x+\left( b_{1}-b_{3}\right) y+\left(
a_{3}-b_{2}\right) z=\frac{1}{2}\left( a_{3}-a_{1}\right) \allowbreak
\end{equation*}%
and%
\begin{equation*}
\left( b_{2}-b_{3}\right) x+\left( b_{1}-a_{2}\right) y+\left(
a_{3}-b_{1}\right) z=\frac{1}{2}\left( a_{3}-a_{2}\right) \allowbreak .
\end{equation*}%
From the first three equations,%
\begin{equation*}
C=\left[ \frac{\alpha _{1}a_{1}+\beta _{3}a_{2}+\beta _{2}a_{3}}{2\Delta },%
\frac{\beta _{3}a_{1}+\alpha _{2}a_{2}+\beta _{1}a_{3}}{2\Delta },\frac{%
\beta _{2}a_{1}+\beta _{1}a_{2}+\alpha _{3}a_{3}}{2\Delta }\right] .
\end{equation*}%
Substitute the co-ordinates of this point into the last three equations to
get the desired result.
\end{proof}

The intersection point obtained from the proof of the Tetrahedron
circumcentre theorem will be called the $B$\emph{-circumcentre} of the
Standard tetrahedron $\overline{X_{0}X_{1}X_{2}X_{3}}$; to obtain the $B$%
-circumcentre of a general tetrahedron, we merely perform the inverse affine
map on such a point.

The $B$-quadrance between the $B$-circumcentre of a tetrahedron and any
point of a tetrahedron is called the $B$\emph{-circumquadrance}, and will be
denoted by $R$. The $B$-circumquadrance of the Standard tetrahedron $%
\overline{X_{0}X_{1}X_{2}X_{3}}$ is then%
\begin{equation*}
R=Q_{B}\left( \overrightarrow{X_{0}C}\right) =\frac{A\left(
a_{1}b_{1},a_{2}b_{2},a_{3}b_{3}\right) +a_{1}a_{2}a_{3}\left( D-4\left(
b_{1}+b_{2}+b_{3}\right) \right) }{4\Delta }
\end{equation*}

The following result, which is an extension of Crelle's result in \cite%
{Crelle} from the Euclidean setting to a more general setting, links the $B$%
-circumquadrance of a tetrahedron with its $B$-quadrume and $B$-quadrances.

\begin{theorem}[Crelle's circumquadrance formula]
For a tetrahedron $\overline{A_{0}A_{1}A_{2}A_{3}}$ with $B$-quadrances $%
Q_{ij}$, for $0\leq i<j\leq 3$, $B$-quadrume $\mathcal{V}$ and $B$%
-circumquadrance $R$, the relation%
\begin{equation*}
4\mathcal{V}R=A\left( Q_{01}Q_{23},Q_{02}Q_{13},Q_{03}Q_{12}\right)
\end{equation*}%
is satisfied.
\end{theorem}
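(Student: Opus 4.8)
The plan is to invoke the Standard-tetrahedron reduction that underlies the rest of the paper. Since the $B$-quadrances, the $B$-quadrume and the $B$-circumquadrance are all defined purely from the $B$-scalar product, and since the affine map carrying a general tetrahedron to $\overline{X_{0}X_{1}X_{2}X_{3}}$ merely replaces the bilinear form by another (still arbitrary) invertible symmetric one while sending quadrances, quadrume, midplanes and hence the circumcentre to their counterparts --- exactly as in the proof of the Tetrahedron $B$-circumcentre theorem --- it suffices to establish the identity for $\overline{X_{0}X_{1}X_{2}X_{3}}$ equipped with a general $B$. There every quantity appearing in the statement is already available in closed form: $Q_{01}=a_{1}$, $Q_{02}=a_{2}$, $Q_{03}=a_{3}$, $Q_{23}=r_{1}$, $Q_{13}=r_{2}$, $Q_{12}=r_{3}$, $\mathcal{V}=4\Delta$, and $R$ is the $B$-circumquadrance computed above, a polynomial in the entries of $B$ divided by $4\Delta$.

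First I would substitute these values. The left-hand side becomes $4\mathcal{V}R=16\Delta R$, and the factor $\Delta$ cancels the denominator of $R$, leaving $4$ times the numerator of $R$. The right-hand side is $A(a_{1}r_{1},a_{2}r_{2},a_{3}r_{3})$; expanding it via $r_{1}=a_{2}+a_{3}-2b_{1}$, $r_{2}=a_{1}+a_{3}-2b_{2}$, $r_{3}=a_{1}+a_{2}-2b_{3}$ together with $A(x,y,z)=2(xy+yz+zx)-(x^{2}+y^{2}+z^{2})$ produces an explicit homogeneous polynomial of degree four in $a_{1},a_{2},a_{3},b_{1},b_{2},b_{3}$. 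The theorem is thereby reduced to a single polynomial identity with integer coefficients, which it therefore suffices to verify over $\mathbb{Q}$, after which it holds over every field in which the paper's constructions make sense (the restriction to characteristics other than $2$ or $3$ being needed elsewhere, for the midpoint and affine-normalisation steps, not for this identity).

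To carry out that check efficiently I would use the $S_{3}$-symmetry of the configuration: simultaneously permuting the pairs $(a_{1},b_{1})$, $(a_{2},b_{2})$, $(a_{3},b_{3})$ permutes $r_{1},r_{2},r_{3}$ in the same way and fixes both sides, so one need only compare the coefficient of a single representative monomial from each symmetry orbit --- for instance $a_{1}^{2}a_{2}^{2}$, $a_{1}^{3}b_{1}$, $a_{1}^{2}a_{2}b_{3}$, $a_{1}^{2}b_{1}^{2}$, $a_{1}a_{2}b_{1}b_{2}$, $a_{1}b_{1}^{3}$ and the remaining mixed types --- rather than expanding blindly. As a sanity check, specialising to $B=I$ collapses the identity to the classical Crelle relation, namely that the triangle whose side lengths are the three products of opposite edges of the tetrahedron has area $6RV$, which also pins down the numerical constant $4$. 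The main obstacle is precisely this degree-four bookkeeping in six variables: there is no conceptual difficulty once the Standard-tetrahedron reduction is in place, but it is lengthy, and the part most prone to arithmetic slips is the collection of cross terms mixing the $a_{i}$ with the $b_{i}$ --- which is exactly why it pays to organise the expansion by symmetry type.
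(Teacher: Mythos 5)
Your proposal is correct and follows essentially the same route as the paper: reduce to the Standard tetrahedron via the affine map, substitute the closed forms $Q_{01}=a_{1},\dots,Q_{12}=r_{3}$, $\mathcal{V}=4\Delta$ and the computed $R$, and verify that $16\Delta R=A\left(a_{1}r_{1},a_{2}r_{2},a_{3}r_{3}\right)$ as a polynomial identity. The paper simply asserts this last algebraic step, whereas you spell out how to organise the verification (symmetry orbits, Euclidean sanity check), which is a harmless elaboration rather than a different argument.
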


\begin{proof}
Without loss of generality, transform $\overline{A_{0}A_{1}A_{2}A_{3}}$ to
the Standard tetrahedron $\overline{X_{0}X_{1}X_{2}X_{3}}$; it is sufficient
to prove the required result for this tetrahedron. So,%
\begin{eqnarray*}
R &=&\frac{A\left( a_{1}b_{1},a_{2}b_{2},a_{3}b_{3}\right)
+a_{1}a_{2}a_{3}\left( a_{1}+a_{2}+a_{3}-2b_{1}-2b_{2}-2b_{3}\right) }{%
4\Delta } \\
&=&\frac{A\left( a_{1}r_{1},a_{2}r_{2},a_{3}r_{3}\right) }{16\Delta }=\frac{%
A\left( Q_{01}Q_{23},Q_{02}Q_{13},Q_{03}Q_{12}\right) }{4\mathcal{V}}.
\end{eqnarray*}%
The required result follows.
\end{proof}

Crelle's circumquadrance formula allows us to conveniently write the $B$%
-circumquadrance of $\overline{X_{0}X_{1}X_{2}X_{3}}$ as%
\begin{equation*}
R=\frac{A\left( a_{1}r_{1},a_{2}r_{2},a_{3}r_{3}\right) }{16\Delta }.
\end{equation*}

\section{Main result}

We now present the main result of this paper, which generalises Theorem 1 to
the rational trigonometric setting over an arbitrary symmetric bilinear form.

\begin{theorem}[Circumquadrance dihedral spread theorem]
For a tetrahedron $\overline{A_{0}A_{1}A_{2}A_{3}}$ in $\mathbb{A}^{3}$, let 
$\mathcal{V}$ be its $B$-quadrume, $\mathcal{A}_{ijk}$ be the $B$-quadrea of
the triangle $\overline{A_{i}A_{j}A_{k}}$, $E_{ij}$ be the $B$-dihedral
spread between $\overline{A_{i}A_{j}A_{k}}$ and $\overline{A_{i}A_{j}A_{l}}$%
, and $R$ be its $B$-circumquadrance. Then,%
\begin{equation*}
\left( \mathcal{A}_{012}\mathcal{A}_{013}\mathcal{A}_{023}\mathcal{A}%
_{123}\right) ^{2}M=1024\mathcal{V}^{5}R,
\end{equation*}%
where%
\begin{equation*}
M=-\det 
\begin{pmatrix}
0 & E_{01} & E_{02} & E_{03} \\ 
E_{01} & 0 & E_{12} & E_{13} \\ 
E_{02} & E_{12} & 0 & E_{23} \\ 
E_{03} & E_{13} & E_{23} & 0%
\end{pmatrix}%
=A\left( E_{01}E_{23},E_{02}E_{13},E_{03}E_{12}\right) .
\end{equation*}
\end{theorem}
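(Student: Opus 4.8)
The plan is to verify the identity for the Standard tetrahedron $\overline{X_0X_1X_2X_3}$, since the $B$-dihedral spreads, $B$-quadreas, $B$-quadrume and $B$-circumquadrance are all preserved (up to the induced change of bilinear form) under the affine map, exactly as in the proofs of Crelle's circumquadrance formula and the Tetrahedron $B$-circumcentre theorem. So I would substitute the explicit values computed in the excerpt: $\mathcal{A}_{012}=4\alpha_3$, $\mathcal{A}_{013}=4\alpha_2$, $\mathcal{A}_{023}=4\alpha_1$, $\mathcal{A}_{123}=4D$, $\mathcal{V}=4\Delta$, together with $E_{01}=a_1\Delta/(\alpha_2\alpha_3)$, $E_{02}=a_2\Delta/(\alpha_1\alpha_3)$, $E_{03}=a_3\Delta/(\alpha_1\alpha_2)$, $E_{23}=r_1\Delta/(\alpha_1 D)$, $E_{13}=r_2\Delta/(\alpha_2 D)$, $E_{12}=r_3\Delta/(\alpha_3 D)$, and $R=A(a_1r_1,a_2r_2,a_3r_3)/(16\Delta)$ from Crelle's circumquadrance formula.

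The first thing to check is the second equality in the statement of $M$, namely that $-\det$ of the hollow symmetric $4\times 4$ matrix with off-diagonal entries $E_{ij}$ equals $A(E_{01}E_{23},E_{02}E_{13},E_{03}E_{12})$; this is a purely algebraic identity (the same hollow-matrix identity that rewrites $M$ in Theorem 1, and indeed the same pattern as Euler's four-point function specialised to $p_i=0$). With that in hand, I would compute the three products of opposite $B$-dihedral spreads: $E_{01}E_{23}=a_1r_1\Delta^2/(\alpha_1\alpha_2\alpha_3 D)$, and cyclically $E_{02}E_{13}=a_2r_2\Delta^2/(\alpha_1\alpha_2\alpha_3 D)$, $E_{03}E_{12}=a_3r_3\Delta^2/(\alpha_1\alpha_2\alpha_3 D)$. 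The common factor $\lambda=\Delta^2/(\alpha_1\alpha_2\alpha_3 D)$ pulls out of the degree-two homogeneous function $A$, giving $M=A(E_{01}E_{23},E_{02}E_{13},E_{03}E_{12})=\lambda^2\,A(a_1r_1,a_2r_2,a_3r_3)$. Meanwhile $\left(\mathcal{A}_{012}\mathcal{A}_{013}\mathcal{A}_{023}\mathcal{A}_{123}\right)^2=(256\,\alpha_1\alpha_2\alpha_3 D)^2$, so the left-hand side becomes $(256)^2(\alpha_1\alpha_2\alpha_3 D)^2\cdot \Delta^4/(\alpha_1\alpha_2\alpha_3 D)^2\cdot A(a_1r_1,a_2r_2,a_3r_3)=(256)^2\Delta^4\,A(a_1r_1,a_2r_2,a_3r_3)$.

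For the right-hand side, $1024\,\mathcal{V}^5 R=1024\,(4\Delta)^5\cdot A(a_1r_1,a_2r_2,a_3r_3)/(16\Delta)=1024\cdot 1024\cdot\Delta^5/(16\Delta)\cdot A(a_1r_1,a_2r_2,a_3r_3)=(256)^2\Delta^4\,A(a_1r_1,a_2r_2,a_3r_3)$, matching the left-hand side exactly, which completes the proof. The only real obstacle is bookkeeping: one must make sure the indexing convention in $E_{ij}$ (which pair of opposite edges the spread $E_{ij}$ is built from) lines up so that $E_{01}$ pairs with $E_{23}$, etc., and that the degree-two homogeneity of Archimedes's function $A$ is applied correctly when extracting the scalar $\lambda$; given the clean cancellation of $\alpha_1\alpha_2\alpha_3 D$, everything reduces to the single nontrivial input $R=A(a_1r_1,a_2r_2,a_3r_3)/(16\Delta)$, which is already established.
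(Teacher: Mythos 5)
Your proposal is correct and follows essentially the same route as the paper: reduce to the Standard tetrahedron, substitute the tabulated values $\mathcal{A}_{ijk}$, $\mathcal{V}$, $E_{ij}$, pull the common factor $\Delta^{2}/(\alpha_{1}\alpha_{2}\alpha_{3}D)$ out of Archimedes's function by its degree-two homogeneity, and finish with Crelle's circumquadrance formula; the constants $65536\Delta^{4}=(256)^{2}\Delta^{4}$ match the paper's computation exactly. The only addition is your explicit remark that the hollow-determinant expression for $M$ equals $A(E_{01}E_{23},E_{02}E_{13},E_{03}E_{12})$, which the paper asserts in the statement without proof, and which is indeed a routine algebraic identity.
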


\begin{proof}
Perform an affine map on $\overline{A_{0}A_{1}A_{2}A_{3}}$ to the Standard
tetrahedron $\overline{X_{0}X_{1}X_{2}X_{3}}$, so that we only require to
prove this result on $\overline{X_{0}X_{1}X_{2}X_{3}}$. We then have%
\begin{equation*}
E_{01}E_{23}=\frac{a_{1}r_{1}\Delta ^{2}}{\alpha _{1}\alpha _{2}\alpha _{3}D}%
,\quad E_{02}E_{13}=\frac{a_{2}r_{2}\Delta ^{2}}{\alpha _{1}\alpha
_{2}\alpha _{3}D},\quad E_{03}E_{12}=\frac{a_{3}r_{3}\Delta ^{2}}{\alpha
_{1}\alpha _{2}\alpha _{3}D}
\end{equation*}%
and%
\begin{equation*}
\mathcal{A}_{012}\mathcal{A}_{013}\mathcal{A}_{023}\mathcal{A}%
_{123}=256\alpha _{1}\alpha _{2}\alpha _{3}D
\end{equation*}%
so that%
\begin{eqnarray*}
M &=&A\left( E_{01}E_{23},E_{02}E_{13},E_{03}E_{12}\right) \\
&=&\left( \frac{\Delta ^{2}\left( a_{1}r_{1}+a_{2}r_{2}+a_{3}r_{3}\right) }{%
\alpha _{1}\alpha _{2}\alpha _{3}D}\right) ^{2}-2\left( \frac{\Delta
^{4}\left( a_{1}^{2}r_{1}^{2}+a_{2}^{2}r_{2}^{2}+a_{3}^{2}r_{3}^{2}\right) }{%
\left( \alpha _{1}\alpha _{2}\alpha _{3}D\right) ^{2}}\right) \\
&=&\frac{\Delta ^{4}\left( a_{1}r_{1}+a_{2}r_{2}+a_{3}r_{3}\right) ^{2}}{%
\left( \alpha _{1}\alpha _{2}\alpha _{3}D\right) ^{2}}-\frac{2\Delta
^{4}\left( a_{1}^{2}r_{1}^{2}+a_{2}^{2}r_{2}^{2}+a_{3}^{2}r_{3}^{2}\right) }{%
\left( \alpha _{1}\alpha _{2}\alpha _{3}D\right) ^{2}} \\
&=&\frac{\Delta ^{4}A\left( a_{1}r_{1},a_{2}r_{2},a_{3}r_{3}\right) }{\left(
\alpha _{1}\alpha _{2}\alpha _{3}D\right) ^{2}}
\end{eqnarray*}%
and thus%
\begin{eqnarray*}
\left( \mathcal{A}_{012}\mathcal{A}_{013}\mathcal{A}_{023}\mathcal{A}%
_{123}\right) ^{2}M &=&\frac{\Delta ^{4}A\left(
a_{1}r_{1},a_{2}r_{2},a_{3}r_{3}\right) }{\left( \alpha _{1}\alpha
_{2}\alpha _{3}D\right) ^{2}}\left( 256\alpha _{1}\alpha _{2}\alpha
_{3}D\right) ^{2} \\
&=&65536\Delta ^{4}A\left( a_{1}r_{1},a_{2}r_{2},a_{3}r_{3}\right) .
\end{eqnarray*}%
By Crelle's circumquadrance formula,%
\begin{equation*}
\left( \mathcal{A}_{012}\mathcal{A}_{013}\mathcal{A}_{023}\mathcal{A}%
_{123}\right) ^{2}M=1024R\left( 1024\Delta ^{5}\right) =1024\left( 4\Delta
\right) ^{5}R=1024\mathcal{V}^{5}R
\end{equation*}%
as required.
\end{proof}

We can now provide an alternative expression for the Circumquadrance
dihedral spread theorem.

\begin{corollary}
If $N=A\left( Q_{01}Q_{23},Q_{02}Q_{13},Q_{03}Q_{12}\right) $, where, for $%
0\leq i<j\leq 3$, $Q_{ij}$ denotes the $B$-quadrances of a tetrahedron $%
\overline{A_{0}A_{1}A_{2}A_{3}}$, then the Circumquadrance dihedral spread
theorem can alternatively be expressed as%
\begin{equation*}
\left( \mathcal{A}_{012}\mathcal{A}_{013}\mathcal{A}_{023}\mathcal{A}%
_{123}\right) ^{2}M=256\mathcal{V}^{4}N.
\end{equation*}
\end{corollary}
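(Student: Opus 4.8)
The plan is to obtain the corollary as an immediate consequence of the two results already established, by eliminating the $B$-circumquadrance $R$ in favour of the $B$-quadrances. The Circumquadrance dihedral spread theorem gives
\begin{equation*}
\left( \mathcal{A}_{012}\mathcal{A}_{013}\mathcal{A}_{023}\mathcal{A}_{123}\right)^{2}M=1024\mathcal{V}^{5}R,
\end{equation*}
and the idea is simply to factor the right-hand side as $256\mathcal{V}^{4}\cdot\left(4\mathcal{V}R\right)$ and then apply Crelle's circumquadrance formula $4\mathcal{V}R=A\left(Q_{01}Q_{23},Q_{02}Q_{13},Q_{03}Q_{12}\right)=N$. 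Substituting gives $\left( \mathcal{A}_{012}\mathcal{A}_{013}\mathcal{A}_{023}\mathcal{A}_{123}\right)^{2}M=256\mathcal{V}^{4}N$, which is the claim. Note that no division by $\mathcal{V}$ is performed at any stage, so the identity is a genuine polynomial identity that persists even when $\mathcal{V}=0$.

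As a cross-check (and an alternative route that does not quote the main theorem verbatim), one can verify the statement directly on the Standard tetrahedron. From the computation in the proof of the Circumquadrance dihedral spread theorem, $\left( \mathcal{A}_{012}\mathcal{A}_{013}\mathcal{A}_{023}\mathcal{A}_{123}\right)^{2}M=65536\,\Delta^{4}A\left(a_{1}r_{1},a_{2}r_{2},a_{3}r_{3}\right)$; since $\mathcal{V}=4\Delta$ we have $\mathcal{V}^{4}=256\Delta^{4}$, and since $Q_{01}Q_{23}=a_{1}r_{1}$, $Q_{02}Q_{13}=a_{2}r_{2}$, $Q_{03}Q_{12}=a_{3}r_{3}$, we get $N=A\left(a_{1}r_{1},a_{2}r_{2},a_{3}r_{3}\right)$. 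Hence $256\mathcal{V}^{4}N=65536\,\Delta^{4}A\left(a_{1}r_{1},a_{2}r_{2},a_{3}r_{3}\right)$, matching the expression for $\left( \mathcal{A}_{012}\mathcal{A}_{013}\mathcal{A}_{023}\mathcal{A}_{123}\right)^{2}M$, and the general case follows by the affine map as before.

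Since both prerequisite results are in hand, there is no substantive obstacle here; the statement is essentially a repackaging of the main theorem. The only points requiring care are the arithmetic of the numerical constant, namely $1024=256\cdot 4$, and making sure that the pairing of $B$-quadrances inside $N$ matches the opposite-edge convention used in Crelle's circumquadrance formula, i.e. that $Q_{ij}$ is paired with the $B$-quadrance of the edge disjoint from $\overline{A_{i}A_{j}}$.
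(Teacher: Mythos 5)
Your proposal is correct and follows essentially the same route as the paper: the paper's proof likewise combines the Circumquadrance dihedral spread theorem with Crelle's circumquadrance formula (written there as $R=N/(4\mathcal{V})$) to obtain $256\mathcal{V}^{4}N$. Your rearrangement avoiding division by $\mathcal{V}$ and the Standard-tetrahedron cross-check are minor refinements of the same substitution argument.
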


\begin{proof}
From Crelle's circumquadrance formula, we have that%
\begin{equation*}
R=\frac{N}{4\mathcal{V}}.
\end{equation*}%
Substitute into the Circumquadrance dihedral spread theorem to obtain%
\begin{equation*}
\left( \mathcal{A}_{012}\mathcal{A}_{013}\mathcal{A}_{023}\mathcal{A}%
_{123}\right) ^{2}M=1024\mathcal{V}^{5}\left( \frac{N}{4\mathcal{V}}\right)
=256\mathcal{V}^{4}N,
\end{equation*}%
as required.
\end{proof}

We can now derive a relationship between $M$ and $N$, which originates from 
\cite{NotoWild2}, we use the tools presented in this paper to provide an
alternate proof.

\begin{theorem}[Dihedral spread ratio theorem]
Given a tetrahedron $\overline{A_{0}A_{1}A_{2}A_{3}}$ in $\mathbb{A}^{3}$
with $B$-dihedral spreads $E_{ij}$ and $B$-quadrances $Q_{ij}$, where $0\leq
i<j\leq 3$, the relation%
\begin{equation*}
\frac{E_{01}E_{23}}{Q_{01}Q_{23}}=\frac{E_{02}E_{13}}{Q_{02}Q_{13}}=\frac{%
E_{03}E_{12}}{Q_{03}Q_{12}}
\end{equation*}%
is satisfied.
\end{theorem}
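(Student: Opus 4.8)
The plan is to exploit the Standard tetrahedron reduction once more: it suffices to verify the three equal ratios for $\overline{X_{0}X_{1}X_{2}X_{3}}$, since dihedral spreads and quadrances are preserved (up to the induced bilinear form) under the affine map. From the preliminary computations already recorded in the excerpt, I have explicit closed forms for everything in sight. First I would assemble the three numerators: using $E_{01}=\dfrac{a_{1}\Delta}{\alpha_{2}\alpha_{3}}$ and $E_{23}=\dfrac{r_{1}\Delta}{\alpha_{1}D}$ (and the analogous expressions), the excerpt already gives
\begin{equation*}
E_{01}E_{23}=\frac{a_{1}r_{1}\Delta^{2}}{\alpha_{1}\alpha_{2}\alpha_{3}D},\quad E_{02}E_{13}=\frac{a_{2}r_{2}\Delta^{2}}{\alpha_{1}\alpha_{2}\alpha_{3}D},\quad E_{03}E_{12}=\frac{a_{3}r_{3}\Delta^{2}}{\alpha_{1}\alpha_{2}\alpha_{3}D}.
\end{equation*}
Next I would recall the corresponding products of quadrances: from $Q_{01}=a_{1}$, $Q_{23}=r_{1}$, and their cyclic analogues, one gets $Q_{01}Q_{23}=a_{1}r_{1}$, $Q_{02}Q_{13}=a_{2}r_{2}$, $Q_{03}Q_{12}=a_{3}r_{3}$.

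The key observation is then immediate: dividing the first display by the second line term-by-term,
\begin{equation*}
\frac{E_{01}E_{23}}{Q_{01}Q_{23}}=\frac{a_{1}r_{1}\Delta^{2}/(\alpha_{1}\alpha_{2}\alpha_{3}D)}{a_{1}r_{1}}=\frac{\Delta^{2}}{\alpha_{1}\alpha_{2}\alpha_{3}D},
\end{equation*}
and the same common value $\dfrac{\Delta^{2}}{\alpha_{1}\alpha_{2}\alpha_{3}D}$ comes out for the other two ratios, since the factors $a_{i}r_{i}$ cancel in each case. This establishes the chain of equalities for the Standard tetrahedron, and hence, by the affine invariance argument, for the general tetrahedron $\overline{A_{0}A_{1}A_{2}A_{3}}$.

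There is essentially no obstacle here; the only thing to be careful about is that the cancellations require $a_{i}r_{i}\neq 0$ (equivalently $Q_{01}Q_{23}\neq0$, etc.), which is handled by the Zero denominator convention adopted in the introduction, so the identity is to be read as an equality of the relevant products when denominators vanish. One could alternatively phrase the proof without any division at all, by cross-multiplying and checking the polynomial identities $E_{01}E_{23}\cdot Q_{02}Q_{13}=E_{02}E_{13}\cdot Q_{01}Q_{23}$ and its companion directly from the closed forms above; this avoids the convention entirely and makes the result valid over any field not of characteristic $2$ or $3$. I would present the division version for readability and remark on the cross-multiplied form.
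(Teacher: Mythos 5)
Your proposal is correct, but it takes a different route from the paper. The paper does not recompute anything on the Standard tetrahedron here: it derives the theorem from Corollary 5, rewriting $(\mathcal{A}_{012}\mathcal{A}_{013}\mathcal{A}_{023}\mathcal{A}_{123})^{2}M=256\mathcal{V}^{4}N$ as $M=K^{2}N$ with $K=16\mathcal{V}^{2}/(\mathcal{A}_{012}\mathcal{A}_{013}\mathcal{A}_{023}\mathcal{A}_{123})$, then using the homogeneity $A(\lambda a,\lambda b,\lambda c)=\lambda^{2}A(a,b,c)$ to write $M=A\left(KQ_{01}Q_{23},KQ_{02}Q_{13},KQ_{03}Q_{12}\right)$ and concluding ``by comparison'' that $E_{01}E_{23}=KQ_{01}Q_{23}$, etc. That final step is actually the weak point of the paper's argument, since equality of two values of Archimedes' function does not by itself force the arguments to agree componentwise; your direct verification on the Standard tetrahedron, where $E_{01}E_{23}/(Q_{01}Q_{23})=E_{02}E_{13}/(Q_{02}Q_{13})=E_{03}E_{12}/(Q_{03}Q_{12})=\Delta^{2}/(\alpha_{1}\alpha_{2}\alpha_{3}D)$ falls out of the closed forms, together with the affine-transfer mechanism the paper itself sets up, is airtight and is the same style of argument the paper uses for its other theorems. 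Note also that your common value agrees with the paper's: since $\mathcal{V}=4\Delta$ and $\mathcal{A}_{012}\mathcal{A}_{013}\mathcal{A}_{023}\mathcal{A}_{123}=256\alpha_{1}\alpha_{2}\alpha_{3}D$, one has $\Delta^{2}/(\alpha_{1}\alpha_{2}\alpha_{3}D)=K$, the Richardson number, so you recover the same identification; what the paper's route buys is the explicit link between this theorem and the Circumquadrance dihedral spread theorem, while yours buys independence from Corollary 5 and, via the cross-multiplied formulation you mention, freedom from any non-vanishing hypotheses on $Q_{01}Q_{23}$, $Q_{02}Q_{13}$, $Q_{03}Q_{12}$.
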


\begin{proof}
We start with the fact that%
\begin{equation*}
A\left( \lambda a,\lambda b,\lambda c\right) =\lambda ^{2}A\left(
a,b,c\right)
\end{equation*}%
which can be easily verified by the reader. Now, let $\mathcal{V}$ be the $B$%
-quadrume of $\overline{A_{0}A_{1}A_{2}A_{3}}$ and let $\mathcal{A}_{ijk}$
be their $B$-quadreas, for $0\leq i<j<k\leq 3$. Then rearrange the equation
of Corollary 5 to get%
\begin{equation*}
M=\frac{256\mathcal{V}^{4}}{\left( \mathcal{A}_{012}\mathcal{A}_{013}%
\mathcal{A}_{023}\mathcal{A}_{123}\right) ^{2}}N=\left( \frac{16\mathcal{V}%
^{2}}{\mathcal{A}_{012}\mathcal{A}_{013}\mathcal{A}_{023}\mathcal{A}_{123}}%
\right) ^{2}N
\end{equation*}%
for%
\begin{equation*}
M=A\left( E_{01}E_{23},E_{02}E_{13},E_{03}E_{12}\right) \quad \mathrm{%
and\quad }N=A\left( Q_{01}Q_{23},Q_{02}Q_{13},Q_{03}Q_{12}\right) .
\end{equation*}%
So,%
\begin{equation*}
M=A\left( \frac{16\mathcal{V}^{2}Q_{01}Q_{23}}{\mathcal{A}_{012}\mathcal{A}%
_{013}\mathcal{A}_{023}\mathcal{A}_{123}},\frac{16\mathcal{V}^{2}Q_{02}Q_{13}%
}{\mathcal{A}_{012}\mathcal{A}_{013}\mathcal{A}_{023}\mathcal{A}_{123}},%
\frac{16\mathcal{V}^{2}Q_{03}Q_{12}}{\mathcal{A}_{012}\mathcal{A}_{013}%
\mathcal{A}_{023}\mathcal{A}_{123}}\right) .
\end{equation*}%
and thus by comparison%
\begin{equation*}
\frac{E_{01}E_{23}}{Q_{01}Q_{23}}=\frac{E_{02}E_{13}}{Q_{02}Q_{13}}=\frac{%
E_{03}E_{12}}{Q_{03}Q_{12}}=\frac{16\mathcal{V}^{2}}{\mathcal{A}_{012}%
\mathcal{A}_{013}\mathcal{A}_{023}\mathcal{A}_{123}}
\end{equation*}%
as required.
\end{proof}

From the proof of the Dihedral spread ratio theorem, we can define%
\begin{equation*}
K=\frac{16\mathcal{V}^{2}}{\mathcal{A}_{012}\mathcal{A}_{013}\mathcal{A}%
_{023}\mathcal{A}_{123}}
\end{equation*}%
which in \cite{NotoWild2} is called the \emph{Richardson number} of a
tetrahedron; this quantity originates from \cite{Richardson} and its
geometric meaning is yet to be fully understood (though the author of the
paper uses it frequently). From the Circumquadrance dihedral spread theorem,
we see that 
\begin{equation*}
R=\frac{\left( \mathcal{A}_{012}\mathcal{A}_{013}\mathcal{A}_{023}\mathcal{A}%
_{123}\right) ^{2}M}{1024\mathcal{V}^{5}}=\frac{M}{4\mathcal{V}}\left( \frac{%
\mathcal{A}_{012}\mathcal{A}_{013}\mathcal{A}_{023}\mathcal{A}_{123}}{16%
\mathcal{V}^{2}}\right) ^{2}=\frac{M}{4\mathcal{V}K^{2}}.
\end{equation*}%
Crelle's circumquadrance formula is immediate from this, as $M=K^{2}N$ from
the proof of the Dihedral spread ratio theorem.

\section*{Acknowledgement}

This paper is the result of independent research conducted by the author.
However, the author would like to thank Prof. Norman Wildberger of UNSW
Sydney in Sydney, NSW, Australia for the inspiration of this paper, as a
result of the author's doctorate program.

\end{document}